\documentclass{ifacconf}

\usepackage{graphicx}      
\usepackage{natbib}        
\usepackage{xcolor}
\usepackage{mathtools,amsmath,amssymb,amsfonts}
\usepackage{adjustbox}
\usepackage{enumitem}

\newcommand{\myDots}{\hbox to 0.9em{.\hss.\hss.}}

\newcommand{\R}{\mathbb{R}}{}
\newcommand{\N}{\mathbb{N}}

\newcommand{\cC}{\mathcal{C}}

\newcommand{\cH}{\mathcal{H}}

\newcommand{\M}{\Sigma}

\newcommand{\wi}{\overline {\imath}}

\newcommand{\cF}{\mathcal{F}}

\newcommand{\wt}{\widetilde}

\DeclareMathOperator*{\argmin}{argmin}

\makeatletter
\newsavebox\myboxA
\newsavebox\myboxB
\newlength\mylenA

\newcommand*\pbar[1]{%
  \hbox{%
     \vbox{%
      \hrule height 0.7pt 
      \kern0.35ex
      \hbox{%
         \kern-0.0em
         \ensuremath{#1}%
         \kern-0.0em
      }%
     }%
  }%
} 

\newcommand*\xbar[2][0.75]{%
    \sbox{\myboxA}{$\m@th#2$}%
    \setbox\myboxB\null
    \ht\myboxB=\ht\myboxA%
    \dp\myboxB=\dp\myboxA%
    \wd\myboxB=#1\wd\myboxA
    \sbox\myboxB{$\m@th\pbar{\copy\myboxB}$}
    \setlength\mylenA{\the\wd\myboxA}
    \addtolength\mylenA{-\the\wd\myboxB}%
    \ifdim\wd\myboxB<\wd\myboxA%
       \rlap{\hskip 0.5\mylenA\usebox\myboxB}{\usebox\myboxA}%
    \else
        \hskip -0.5\mylenA\rlap{\usebox\myboxA}{\hskip 0.5\mylenA\usebox\myboxB}%
    \fi}
\makeatother

\allowdisplaybreaks

\begin{document}
\begin{frontmatter}

\title{Feedback Stabilization of Switched Systems: Memory is not needed.\thanksref{footnoteinfo}} 

\thanks[footnoteinfo]{T. Alves Lima was supported by CNPq (Conselho Nacional de Desenvolvimento Científico e Tecnológico) via the grant number 443674/2024-8. This paper was partly written while T. Alves Lima was with the Department of Electrical Engineering, Federal University of Ceará
(UFC), Fortaleza, 60020-181, CE, Brazil.}

\author[First]{Thiago Alves Lima} 
\author[Second]{Matteo Della Rossa} 
\author[Third]{Antoine Girard}

\address[First]{Systems Engineering Division, Aeronautics Institute of Technology (ITA), Fortaleza, 60415-513, CE, Brazil (e-mail: thiago.lima@gp.ita.br).}
\address[Second]{Department of Electronics and Telecommunications, Politecnico di Torino, corso Duca degli Abruzzi, 24, Torino, Italy (e-mail: matteo.dellarossa@polito.it).}
\address[Third]{Université Paris-Saclay, CNRS, CentraleSupélec, Laboratoire des Signaux et Systèmes, 91190, Gif-sur-Yvette, France (e-mail: antoine.girard@centralesupelec.fr).}

\begin{abstract}   
A long-standing assumption in the literature on switched linear systems is that static, homogeneous of degree one feedbacks form the most general class of controllers necessary and sufficient for stabilization. In this paper, we provide a rigorous justification. More specifically, we prove by construction that if a switched linear system admits a stabilizing full-information controller, with access to the entire history of states and switching signals, then a memoryless and homogeneous of degree one stabilizing controller also exists. Specifically, in the mode-independent setting the controller can be chosen to depend only on the current state, and in the mode-dependent setting only on the current state and active mode. Our results thus show that dynamic controllers offer no additional stabilizing capability for switched linear systems, formally validating this folklore claim.
\end{abstract}

\begin{keyword}
Switched systems, Feedback stabilization, Controllers with memory, Static controllers, Homogeneous functions.
\end{keyword}

\end{frontmatter}

\section{Introduction}

Switched systems are dynamical systems whose evolution depends on a family of modes (or subsystems) and on a switching signal that determines which mode is active at each instant. They arise in numerous engineering applications and exhibit intrinsically rich behaviors: for example, switching among a collection of individually stable modes can render the overall system unstable, and, conversely, appropriate switching may stabilize a family of unstable modes. For further details and classical results, see, for instance,~\citep{Lib03}.

A fundamental distinction in the study of switched systems concerns whether the switching signal is treated as external and arbitrary, or as a control input to be designed in order to achieve a desired objective. In the latter case, many strategies have been proposed to achieve stabilization by appropriate design of the switching law; see, for example,~\citep{FiaGir16,GerCol06,WICKS1998140}. Such notion of stabilization via switched signal has been also studied from a complementary perspective in~\citep{JunMas17,DettJun20}, introducing and analyzing the properties of the so called lower- and stabilizability spectral radius.

In a different line of work, several authors have also considered the joint synthesis of the switching signal and a continuous control input. For contributions along this direction, see, for instance,~\citep{HuMaLIn08,LinAnt08,ZhangAbate09,FIACCHINI2017181}.

In this paper, our focus is on the setting in which the switching signal is external and arbitrary, and where one seeks to design an additional control input to ensure suitable closed-loop behavior, uniformly on the family of arbitrary switching. This scenario has been explored, for instance, in~\cite{BlaMia03,BlaMiaSav07,LeeKha09,Lee06,DelRosAlv24}, and poses significant challenges owing to the need to provide certificates that work for \textit{all possible switching signals}.

In~\citep{BlaMia03,BlaMiaSav07}, polyhedral Lyapunov functions are employed to derive bilinear matrix (in-)equalities that enable the computation of either piecewise-linear static controllers or linear dynamic controllers. These controllers depend on the value of the system state and may either adapt to the current mode (the so-called ``mode-dependent" case) or rely solely on the current state (which we refer to as ``mode-independent" controllers).\footnote{In \cite{BlaMia03,BlaMiaSav07} and also in \cite{DelRosAlv24}, these controllers are called “robust”. Here we use the term “mode-independent’’ instead, reserving “robust’’ for when considering uncertainties in the subsystem models.}
In~\citep{LeeKha09,Lee06}, multiple quadratic Lyapunov functions are used to derive linear stabilizing controllers that depend not only on the current state but also on a finite memory of past modes. More recently,~\cite{DelRosAlv24} employed graph-based Lyapunov functions to derive linear matrix inequalities for the synthesis of stabilizing piecewise-linear control laws, thereby connecting the framework of path-complete Lyapunov functions (see~\cite{AhmJun:14}) with earlier min–max quadratic Lyapunov approaches developed for stability analysis. 

Those different feedback strategies lead to several questions regarding which classes of feedback maps should be considered, especially in terms of their stabilizability power. In particular, controllers with ``memory'' (of past states and/or switching modes) naturally include dynamical controllers of the type used in~\cite{BlaMia03,BlaMiaSav07}, which motivates the question of whether allowing memory actually enlarges the set of stabilizing controllers. More specifically, we aim to address the following question:

\textit{If a switched linear system is stabilizable by a controller that has access to the \emph{entire} past of states and switching modes, does this imply the existence of a stabilizing controller that \emph{does not} use such memory?}

In this paper, we show that the answer is affirmative. We provide a rigorous justification, for both the mode-dependent and mode-independent cases, that infinite-memory stabilizability implies stabilizability by a memoryless feedback law. In addition, we establish that the resulting memoryless stabilizing controllers can always be chosen to be homogeneous of degree one. Such homogeneity allows us also to prove that the asymptotic stability of the resulting closed-loop is exponential and robust (from exogenous disturbances), in the sense introduced in~\cite{KellTeel}. The results in this paper complement those in~\cite{lima2025}, where a hierarchy of \emph{graph-based} sufficient linear matrix inequality (LMI) conditions is shown to be asymptotically necessary for the design of piecewise linear feedback controllers based on piecewise quadratic Lyapunov functions.

\subsection{Organization and notation}

The rest of this paper is organized as follows. In Section~\ref{sec:preliiminaries} we introduce in detail the class of systems under consideration and review the relevant stability and stabilizability notions. The main results are presented in Section~\ref{sec:main}. Finally, Section~\ref{sec:conc} provides concluding remarks and outlines directions for future work.

\textbf{Notation:} We denote by $\N$ the set of natural numbers including $\{0\}$, by $\N_+$ the set of natural numbers excluding $\{0\}$. Given a countable set of symbols $\Sigma$, referred to as the \emph{alphabet}. Given $T\in \N$, we denote by $\Sigma^T$ the set of strings of length $T$ in $\Sigma$, and we denote its elements by $\wi=(i_0,\dots, i_{T-1})\in \Sigma^T$.
 We use the convention that $\Sigma^0=\{ \bf \epsilon\}$ where ``$\bf \epsilon$'' is an auxiliary symbol, which stands for ``empty string''. Given $\wi=(i_0,\dots, i_{T-1})\in \Sigma^T$, we denote by $|\wi|=T$ its \emph{length}.
 With $\displaystyle \Sigma^\star:=\bigcup_{T\in \N}\Sigma^T$ we denote the \emph{Kleene closure} of $\Sigma$, that is, the set of all finite-length strings in $\Sigma$.
 \\
 With $\Sigma^\omega$ we denote the \emph{$\omega$-closure} of $\Sigma$, i.e. the set of all the (infinite) sequences in $\Sigma$; more precisely, $\Sigma^\omega:=\{\sigma:\N\to \Sigma\}$.

Given a vector $x\in\R^n$, with $\|x\|$ we denote the Euclidean norm (a.k.a. $2$ norm).
 A function $U:\R^n\to \R$ is said to be \emph{positive definite} if $U(0)=0$ and $U(x)>0$ for all $x\neq 0$. It is said to be radially unbounded if $\lim_{\lambda \to +\infty} U(\lambda x)=+\infty$ for all $x\neq 0$.
Given $p\in \N$, a function $\Psi:\R^n\to \R^m$ is said to be \emph{homogeneous of degree $p$} if
\[
\Psi(\lambda x)=\lambda^p \Psi(x),\;\;\;\forall \lambda\in \R,\;\;\forall x\in \R^n.
\]
It is said to be \emph{absolutely homogeneous of degree 
p} if 
\[
\Psi(\lambda x)=|\lambda|^p \Psi(x),\;\;\;\forall \lambda\in \R,\;\;\forall x\in \R^n.
\]

\section{Preliminaries}\label{sec:preliiminaries}

In this section we provide the formal definition of the  class of systems studied in this paper, together with the considered stabilization notions.

\subsection{Definitions and robust stabilizability notions}
Let us fix a countable alphabet $\Sigma$, and a set  $\cF=\{(A_i,B_i)\in \R^{n\times n}\times \R^{n\times m}\;\vert\;i\in \M\}$, we study the \emph{discrete-time control switched system} defined by
\begin{equation}\label{eq:SwitchedSystemInput}
x(k+1)=A_{\sigma(k)}x(k)+B_{\sigma(k)}u(k),\;\;\;k\in \N,
\end{equation}
where $\sigma:\N\to \M$ (equivalently, $\sigma\in \M^\omega$) is the \emph{switched signal} and $u:\N\to \R^m$ is a control input.
The switching signal $\sigma:\N\to\M$ is regarded as arbitrary and cannot be modified by the user, who must achieve stabilization via the control input $u:\N\to\R^m$. In particular, we are interested in \emph{feedback} stabilization: we would like to analyze the existence (or not) of stabilizing feedback maps that depend, broadly speaking, on the ``behavior of the system''.
In particular, one can imagine several frameworks, notably:
\begin{itemize}[leftmargin=*]
\item Feedback maps that depend only on the current \emph{state},
\item Feedback maps that depend only on the current \emph{state} and current \emph{switching mode},
\item Feedback maps that possibly depend also on the \emph{past} behavior (a.k.a. the memory) of \emph{state and/or switching modes}.
\end{itemize}

To formalize the aforementioned notions of feedback stabilization for~\eqref{eq:SwitchedSystemInput} we need to introduce a general notion of ``dynamic systems depending on the past''. For a general framework of such class of systems we refer to~\cite{DelRosJung24}.
\begin{defn}\label{defn:MemoryDepSystems}
Given a continuous state-space $\R^n$ and an alphabet $\Sigma$, let us consider the set
\begin{equation*}
    \mathcal{H} := \Big\{
(\overline x,\wi)\in (\mathbb R^n)^\star \times \Sigma^\star
\;\big|\;
|\overline x| = |\wi|
\Big\}
\end{equation*}
and a function
\[
f: \mathcal{H} \to \mathbb R^n.
\]
The \emph{(memory-dependent) dynamical system induced by $f$} is defined by the recursion equation
\begin{equation} \label{eq:Recursive equation}
\begin{cases}
x(k+1)=f(x(k),\,\dots\, x(0);\,\sigma(k),\,\dots\,, \sigma(0)),\\
x(0)=x_0,\\
\sigma\in \Sigma^\omega.
\end{cases}
\end{equation}
It can be seen that, given any $x_0\in \R^n$ and any $\sigma\in \Sigma^\omega$, there exists a unique solution to~\eqref{eq:Recursive equation}, that we denote to
$\phi_f(\cdot, x_0,\sigma):\N\to \R^n$.
\end{defn}
We now introduce the considered notion of stability
for the class of systems introduced in Definition~\ref{defn:MemoryDepSystems}.
\begin{defn}\label{defn:generalStabilityDefn}
Let us consider $f:\mathcal{H} \to \mathbb R^m$. We say the system~\eqref{eq:Recursive equation} is \emph{uniformly exponentially stable} (UES) if
there exist $M > 0$ and $\gamma\in [0,1)$ such that
\[
\|\phi_f(k,\sigma,x_0)\|\leq M\gamma^k\|x_0\|,\;\;\;\forall x_0\in \R^n,\;\forall \sigma\in \Sigma^\omega,\;\forall \;k\in \N.
\]
\end{defn}
Note that ``uniformity'' in such definition involves both the initial state $x_0\in \R^n$ and the switching signal $\sigma\in \Sigma^\omega$ that can be both arbitrary.

This general definition can now be used to introduce the \emph{feedback stabilizability} notions for~\eqref{eq:SwitchedSystemInput} considered in this manuscript.
First, consider the set
\[
\mathcal{H}_{-}
:= 
\Big\{
(\overline x,\wi)\in (\mathbb R^n)^\star \times \Sigma^\star
\;\big|\;
|\overline x| = |\wi| + 1
\Big\}.
\]
In contrast to the set $\mathcal{H}$, which contains equal-length pairs, the set $\mathcal{H}_{-}$ captures the natural information pattern in which the controller knows the \textit{current and past} states but \textit{only the past switching} modes. This distinction is needed for the next definition.

\begin{defn}\label{defn:StabNot}
System~\eqref{eq:SwitchedSystemInput} is said to be
\begin{enumerate}[leftmargin=0.7cm]
\item[(1)]\emph{Mode-independent feedback stabilizable} (IFS), 
if there exists  $\Phi:\R^n \to \R^m$ such that, considering $f^{\Phi}:\mathcal{H} \to  \R^n$ defined by
\[
f^\Phi(x_k,\dots, x_0;\;i_k,\dots, i_0):=A_{i_k}x_k+B_{i_k}\Phi(x_k)\,
\] 
for all $(x_k,\dots,x_0;\;i_k,\dots, i_0)\in (\mathbb R^n)^\star \times \Sigma^\star$,
the dynamical system induced by $f^\Phi$ is UES.
\item[(1)$^d$] \emph{Mode-dependent feedback stabilizable} (DFS), 
if there exists  $\Phi_d:\Sigma\times \R^n \to \R^m$ such that, considering $f^{\Phi_d}: \mathcal{H} \to \R^n$ defined by
\[
f^{\Phi_d}(x_k,\dots x_0;\;i_k,\dots, i_0):=A_{i_k}x_k+B_{i_k}\Phi_d(i_k,x_k)\,
\] 
for all $(x_k,\dots,x_0;\;i_k,\dots, i_0)\in (\mathbb R^n)^\star \times \Sigma^\star$,
the dynamical system induced by $f^{\Phi_d}$ is UES.
\end{enumerate}
\begin{enumerate}[leftmargin=0.7cm]
\item[(2)] \emph{Current-mode-independent memory-feedback stabilizable} (IFS$_{m}$), 
if there exists $\Psi :
\mathcal{H}_{-} \to \mathbb R^m$ such that, considering $f^{\Psi}: \mathcal{H} \to \R^n$ defined by
\[
\begin{aligned}
f^\Psi&(i_k,\dots, i_0,x_k,\dots x_0):=\\&A_{i_k}x_k+B_{i_k}\Psi(x_k,\dots,x_0;\;i_{k-1},\dots, i_0)\,
\end{aligned}
\] 
for all $(x_k,\dots,x_0;\;i_k,\dots, i_0)\in (\mathbb R^n)^\star \times \Sigma^\star$, the dynamical system induced by $f^\Psi$ is UES.
\item[(2)$^d$] \emph{Current-mode-dependent memory-feedback stabilizable} (DFS$_{m}$), 
if there exists  $\Psi_d:\mathcal{H} \to \mathbb R^m$ such that, considering $f^{\Psi_d}:\mathcal{H} \to \mathbb R^m$ defined by
\[
\begin{aligned}
f^{\Psi_d}&(i_k,\dots, i_0,x_k,\dots x_0):=\\&A_{i_k}x_k+B_{i_k}\Psi_d(x_k,\dots,x_0;\;i_{k},\dots, i_0)\,
\end{aligned}
\] 
for all $(x_k,\dots,x_0;\;i_k,\dots, i_0)\in (\mathbb R^n)^\star \times \Sigma^\star$, the dynamical system induced by $f^{\Psi_d}$ is UES.

\end{enumerate}
\end{defn}
\begin{rem}
Let us discuss the introduced notions of stabilizability. In Item~(1) it is required that the system can be feedback-stabilized knowing only the value of the current state. The map $\Phi:\R^n \to \R^m$ is thus called a \emph{static mode-independent state-feedback}. In Item~(1)$^d$ we ask for the existence of a feedback map depending on both the current state and the switching value. Such a map $\Phi_d:\Sigma \times \R^n \to \R^m$ is thus called a \emph{static mode-dependent state-feedback.} It is well known that the notion of stabilizability in~(1) is strictly more demanding than the one in~(1)$^d$. For an example of a system that is DFS but not IFS we refer to~\citep[Example 5.1]{BlaMiaSav07}.

In the definition introduced in Item~(2) the feedback map may in fact depend also on the \emph{history/past} values of the state–signal pair, as well as on the current state. It cannot depend on the current switching mode. For this reason such a map $\Psi:\cH_-\to \R^m$ is called a \emph{current-mode-independent controller with state-signal memory}. In Item~(2)$^d$ the feedback controller $\Psi^d:\cH\to \R^m$ may also depend on the current mode, and is thus called a \emph{current-mode-dependent controller with state-signal memory}. As before, it is readily seen that the notion in (2) is strictly more demanding than the one in (2)$^d$. Let us note that this formalism (in which the controller can depend also on past values of the state-signal pairs) allows us, in a succinct manner, to model the case of \emph{dynamic feedback controllers}. Indeed, in such a case the dependence of the controller on past values of the state–signal pair is defined by an additional dynamical system (the dynamic controller itself). We do not develop this idea further, as it is not the main message of this manuscript.
\end{rem}

\section{Main result}\label{sec:main}
Looking at the stabilizability notions introduced 
in the previous section, one can postulate the following claim: allowing the controller to “use’’ the past values of the state–signal pairs might provide additional flexibility to the control design, thus leading to a less restrictive notion of stabilizability. In this section we show that this is \emph{not} the case. It turns out that, at least from a theoretical point of view, the stabilizability notion in~Item~(1) in Definition~\ref{defn:StabNot} is \emph{equivalent} to the notion in Item~(2) (and similarly for (1)$^d$ and~(2)$^d$).

\begin{thm} \label{thm:main}
Let us consider a finite alphabet $\Sigma$ and $\cF=\{(A_i,B_i)\in \R^{n\times n}\times \R^{n\times m}\;\vert\;i\in \M\}$. Then
\begin{enumerate}[leftmargin=*]
\item[(A)] System~\eqref{eq:SwitchedSystemInput} is IFS if and only if it is  IFS$_{m}$,\\
\item[(B)] System~\eqref{eq:SwitchedSystemInput} is DFS if and only if it is  DFS$_{m}$.
\end{enumerate}
\end{thm}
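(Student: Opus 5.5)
The implication ``static $\Rightarrow$ memory'' is immediate in both items. Given $\Phi$ (resp.\ $\Phi_d$), set $\Psi(x_k,\dots,x_0;i_{k-1},\dots,i_0):=\Phi(x_k)$ (resp.\ $\Psi_d(x_k,\dots,x_0;i_k,\dots,i_0):=\Phi_d(i_k,x_k)$). Then $f^\Psi=f^\Phi$ (resp.\ $f^{\Psi_d}=f^{\Phi_d}$), so UES is inherited. The substance is the converse. I will prove it by a min--max value-function (dynamic programming) argument, which at the same time produces a homogeneous of degree one feedback.

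\textbf{Step 1: value function.} For $x\in\R^n$, let $V(x)$ be the infimum, over all admissible memory controllers $\Psi:\cH_-\to\R^m$ (resp.\ $\Psi_d:\cH\to\R^m$), of
\[
\sup_{\sigma\in\Sigma^\omega}\sum_{k\ge 0}\|\phi(k,x,\sigma)\|.
\]
If $\Psi$ is the stabilizing memory controller given by IFS$_m$ (resp.\ DFS$_m$), then UES yields
\[
\|x\|\le V(x)\le \tfrac{M}{1-\gamma}\|x\|=:c\|x\| .
\]
Linearity of \eqref{eq:SwitchedSystemInput} gives absolute homogeneity, $V(\lambda x)=|\lambda|V(x)$. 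For $\lambda\neq0$, the controller $\Psi^\lambda(\overline x;\wi):=\lambda\Psi(\lambda^{-1}\overline x;\wi)$ maps trajectories from $x$ onto trajectories from $\lambda x$, scaled by $\lambda$; hence $V(\lambda x)\le|\lambda|V(x)$, and the symmetric argument gives equality.

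\textbf{Step 2: Bellman inequality.} Take any controller $\Psi$ whose cost from $x$ is at most $V(x)+\varepsilon$. In the IFS case, its first input $u=\Psi(x;\epsilon)$ does not depend on the mode. For each $i\in\Sigma$, the shifted map
\[
(\overline y;\wi)\mapsto\Psi(\overline y,x;\wi,i)
\]
is again an admissible memory controller starting from $A_ix+B_iu$, so its cost is at least $V(A_ix+B_iu)$. Taking the worst $i$ gives
\[
V(x)+\varepsilon\ \ge\ \|x\|+\max_{i\in\Sigma}V(A_ix+B_iu).
\]
In the DFS case the first input may depend on $i_0$, and the same argument gives
\[
V(x)+\varepsilon\ \ge\ \|x\|+\max_{i}\,\inf_u V(A_ix+B_iu).
\]
Finiteness of $\Sigma$ is used here, so that the worst case over the first mode is a maximum rather than a supremum.

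\textbf{Step 3: static homogeneous feedback.} This is the delicate step, because the infimum need not be attained and the choice of $u$ must respect homogeneity.

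I fix a set $S$ containing exactly one of $\{x,-x\}$ for each unit vector $x$ (for instance, first nonzero coordinate positive). For $x\in S$, Step 2 with $\varepsilon=\tfrac12$ gives some $u=:\Phi(x)$ (resp.\ $u=:\Phi_d(i,x)$ for each $i$) such that
\[
V(A_ix+B_i\Phi(x))\le V(x)-\tfrac12\|x\|\quad\text{for all } i\in\Sigma.
\]
Since $\|x\|=1$ on $S$, the slack $\tfrac12$ equals $\tfrac12\|x\|$. I then extend by $\Phi(\lambda x):=\lambda\Phi(x)$ for $\lambda\in\R$, $x\in S$, and $\Phi(0)=0$. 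This map is well defined and homogeneous of degree one. No regularity or measurability of $\Phi$ is needed, because the stability notion is purely pointwise.

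By homogeneity of $V$ and linearity, the decrease inequality holds for all $x\in\R^n$. Along closed-loop solutions this gives
\[
V(x(k+1))\le\Bigl(1-\tfrac1{2c}\Bigr)V(x(k))
\quad\text{and}\quad
\|x(k)\|\le c\Bigl(1-\tfrac1{2c}\Bigr)^k\|x_0\|.
\]
This is UES uniformly in $\sigma$, i.e.\ IFS (resp.\ DFS), with a controller that is memoryless and homogeneous of degree one.
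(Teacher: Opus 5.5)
Your proof is correct. It shares the value function $V$ and its absolute homogeneity with the paper, but you extract the static feedback differently.

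The paper goes further with $V$. It proves that $V$ is sub-additive, hence a norm, hence convex and continuous. It then invokes the dynamic programming principle to get the exact Bellman equation $V(x)=\|x\|+\inf_{u}\max_{i}V(A_ix+B_iu)$, and uses coercivity on $K^\perp$ to show that this infimum is attained. Finally it takes $\Phi(x)$ in the argmin and asserts in one sentence that the selection can be made homogeneous of degree one.

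You use only the elementary one-sided Bellman inequality, obtained by shifting a near-optimal memory controller. You absorb possible non-attainment by taking the slack $\varepsilon=\tfrac12\|x\|$ on a set of sign-representatives of the unit sphere, and then extend by $\Phi(\lambda x)=\lambda\Phi(x)$. This route needs neither the norm property of $V$ nor attainment of the minimum. It also makes the homogeneity of $\Phi$ explicit and well defined, which the paper only asserts. Incidentally, your Step~2 does not use finiteness of $\Sigma$: a supremum over the first mode gives the same per-mode bound.

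Your route gives up two things. The contraction factor is $1-\tfrac{1}{2c}$ rather than $1-\tfrac{1}{c}$, which is immaterial. More substantively, you learn nothing about the regularity of $V$. The paper needs $V$ to be a norm, in particular continuous, to apply the Kellett--Teel result in the subsequent robustness corollary, so its longer argument pays off there. For the equivalence stated in the theorem itself, your shorter argument suffices.
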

\begin{pf}
The ``\emph{only if}'' part is a straightforward consequence of the fact that, in both cases, a  static feedback controller can be considered to be a controller with state-signal memory (but that actually does not explicitly use the past of state-signal pairs). In the following, we focus on the ``\emph{if}'' part. We detail the mode-independent case (i.e. Item~\emph{(A)}) and show how the mode-dependent case can be proved along similar lines.

We denote the set of current-mode-independent controllers with state-signal memory by \[
\mathcal C_m:=\left \{\chi:\cH_-\to \R^m\right \}.
\]
Let us assume that the system is IFS$_{m}$. By definition,  there exist $C > 1$, $\gamma\in [0,1)$ and a controller $\Psi\in \mathcal C_m$  such that 
\begin{equation}
\label{eq:exp_memory}
|\phi(k,\sigma,x_0,\Psi)| \le C \gamma^k |x_0|,
\end{equation}
for all $x_0\in \R^n$, for all $ 
\sigma \in \Sigma^\omega$, and for all $k\in \N$, where $\phi(k,\sigma,x_0,\Psi)$ denotes the solution corresponding to the closed-loop (memory-dependent) system.

Let us consider the function $V:\R^n \to \R$ defined for all $x_0 \in \R^n$ by
\begin{equation}
\label{eq:lyap}
V(x_0) = \inf_{\Psi\in \mathcal C_m} \sup_{\sigma \in  \Sigma^\omega} \sum_{k=0}^{\infty} 
\|\phi(k,\sigma,x_0,\Psi)\|.    
\end{equation}
From \eqref{eq:exp_memory}, it follows that
\begin{equation}
\label{eq:lyap1}
\|x_0\| \le V(x_0) \le \frac{C}{1-\gamma} \|x_0\|,\; \forall x_0 \in \R^n.
\end{equation}
Hence, $V$ is positive definite and radially unbounded.

We now prove that $V$ is absolutely homogeneous of degree~$1$. Consider $x_0\in \R^n$, $\lambda \ne 0$ and let $z_0=\lambda x_0$. 
Consider any $\Psi \in \mathcal C_m$ and let $\Psi'\in \mathcal C_m$ be defined by
$$
\begin{aligned}
\Psi'&(y_k,\dots,y_0;\,i_{k-1},\dots,i_0):=\\&
\lambda
\Psi\left(\frac{y_k}{\lambda},\dots,\frac{y_0}{\lambda};\,i_{k-1},\dots,i_0\right)
\end{aligned}
$$
for all $(i_{k-1},\dots, i_0)\in \M^k$, all $(y_k,\dots, y_0)\in \R^{n\times (k+1)}$, and all $k\in \N$.
One can easily check that, for all $\sigma\in \M^\omega$ and for all $k\in \N$, we have
$\phi(k,\sigma,z_0,\Psi')=\lambda \phi(k,\sigma,x_0,\Psi)$.
Indeed, fix $\sigma \in\M^\omega$,  for $k=0$ we trivially have
$\phi(0,\sigma,z_0,\Psi')=z_0=\lambda x_0=\lambda \phi(0,\sigma,x_0,\Psi)$.
For $k=1$ we have: 
\[
\begin{aligned}
\phi&(1,\sigma,z_0,\Psi')= A_{\sigma(0)}z_0+B_{\sigma(0)}\Psi'(z_0,\epsilon)\\&=A_{\sigma(0)}\lambda x_0+B_{\sigma(0)}\Psi'(\lambda x_0,\epsilon)\\&=A_{\sigma(0)}\lambda x_0+B_{\sigma(0)}\lambda\Psi(x_0,\epsilon)\\&=\lambda\left (A_{\sigma(0)} x_0+B_{\sigma(0)}\Psi(x_0,\epsilon)\right)=
\lambda \phi(1,\sigma,x_0,\Psi)
\end{aligned}
\]
and then we can proceed by induction on $k\in \N$.

It follows from the definition of $V$ in~\eqref{eq:lyap}
that $V(z_0) \le |\lambda| V(x_0)$. Reversing the role of $x_0$ and $z_0$ we can show similarly that
$V(x_0) \le V(z_0)/|\lambda|$.\\
Hence, $V(\lambda x_0)=|\lambda| V(x_0)$ for all $x_0 \in \R^n$, $\lambda \ne 0$, which extends to all $\lambda \in \R$ since clearly $V(0)=0$.

We now prove that $V$ is sub-additive. Consider $z_1, z_2 \in \R^n$ and let $x_0=z_1+z_2$.
Given $\Psi_1, \Psi_2 \in \mathcal C_m$ we define  $\Psi \in \mathcal C_m$ by
\begin{equation}\label{eq:DefnControlSum}
\begin{aligned}
\Psi&(y_k,\dots,y_0;\,i_{k-1},\dots,i_0):=\\
&\Psi_1(\phi(k,\sigma,z_1,\Psi_1),\dots,z_1;\,i_{k-1},\dots,i_0))\\
&+
\Psi_2(\phi(k,\sigma,z_2,\Psi_2),\dots,z_2;\,i_{k-1},\dots,i_0)),
\end{aligned}
\end{equation}
for all $(i_{k-1},\dots, i_0)\in \M^k$, all $(y_k,\dots, y_0)\in \R^{n\times (k+1)}$, and all all $k\in \N$.
One can check by induction that, for all $\sigma\in \M^\omega$ and all $k\in \N$, it holds that 
$\phi(k,\sigma,x_0,\Psi)=\phi(k,\sigma,z_1,\Psi_1)+\phi(k,\sigma,z_2,\Psi_2)$.
It follows from the definition of $V$ in~\eqref{eq:lyap} that
$V(x_0) \le V(z_1)+V(z_2)$. Indeed, let us consider any $\varepsilon>0$ and $\Psi_1,\Psi_2\in \cC_m$ such that 
\[
V(z_j)+\frac{\varepsilon}{2}\geq \sup_{\sigma \in  \Sigma^\omega} \sum_{k=0}^{\infty} 
\|\phi(k,\sigma,z_j,\Psi_j)\|, \;\;\forall \;j\in \{1,2\},
\]
and consider $\Psi$ defined as in~\eqref{eq:DefnControlSum}, we have, by triangular inequality, that 
\[
\begin{aligned}
V(x_0)&\leq \sup_{\sigma \in  \Sigma^\omega} \sum_{k=0}^{\infty} 
\|\phi(k,\sigma,x_0,\Psi)\|\\&=\sup_{\sigma \in  \Sigma^\omega} \sum_{k=0}^{\infty} 
\|\phi(k,\sigma,z_1,\Psi_1)+\phi(k,\sigma,z_2,\Psi_2)\|\\&\leq\sup_{\sigma \in  \Sigma^\omega} \sum_{k=0}^{\infty} 
\|\phi(k,\sigma,z_1,\Psi_1)\|\\&\,\;\;+\sup_{\sigma \in  \Sigma^\omega} \hspace{-0.1cm}\sum_{k=0}^{\infty} \|\phi(k,\sigma,z_2,\Psi_2)\|   \leq V(z_1)+V(z_2)+\varepsilon.  
\end{aligned}
\]
By arbitrariness of $\varepsilon>0$ we can thus conclude that $V(x_0)\leq V(z_1)+V(z_2)$, as required.

We have thus proved that $V:\R^n\to \R$ is positive definite, absolutely homogeneous of degree $1$ and sub-additive, in other words it is a \emph{norm} of $\R^n$. This also implies that $V$ is convex and therefore continuous. Moreover, using the dynamic programming principle (see e.g.~\citep[Chapter~1]{bertsekas2012dynamic}), we get that
$$
V(x) = \|x\| 
+\inf_{u\in \R^m} \max_{i\in \Sigma}
V(A_i x + B_i u),\; \forall x \in \R^n.
$$
Since $V$ is a norm in $\R^n$, the infimum is actually a minimum.
Indeed, 
\[
\inf_{u\in \R^m} \max_{i\in \Sigma}
V(A_i x + B_i u)=\inf_{u\in K^\perp} \max_{i\in \Sigma}
V(A_i x + B_i u)
\]
where $K^\perp$ is the linear subspace orthogonal to $K=\bigcap_{i\in I}\text{Ker}(B_i)$, i.e.,  $\R^m=K\oplus K^\perp$.
Then, 
\[
\inf_{u\in \R^m} \max_{i\in \Sigma}
V(A_i x + B_i u)=\inf_{u\in K^\perp} W(u)
\]
where $W:\R^m\to \R$ is the function defined by $W(u)=\max_{i\in I}V_i(A_ix+B_iu)$, which is  continuous, convex and bounded from below (by $0$). It is easy to see that, when restricted to the linear subspace $K^\perp$, such function is also coercive, and thus its infimum is attained.
We can thus write
\begin{equation}
    \label{eq:dp}
V(x) = \|x\| 
+\min_{u\in \R^m} \max_{i\in \Sigma}
V(A_i x + B_i u),\; \forall x \in \R^n.
\end{equation}
Hence, let us consider a feedback control $\Phi:\R^n \to \R^m$ such that 
$$
\Phi(x)\in \argmin_{u\in \R^m} \max_{i\in \Sigma}
V(A_i x + B_i u),\; \forall x \in \R^n.
$$
Since $V$ is absolutely homogeneous of degree $1$ and by linearity of the dynamics, we can always choose $\Phi$ to be homogeneous of degree $1$. 

 For all $x\in \R^n$ we have  
$$
 \max_{i\in \Sigma}
V(A_i x + B_i \Phi(x)) = \min_{u\in \R^m} \max_{i\in \Sigma}
V(A_i x + B_i u).
$$
Then, from \eqref{eq:dp} and \eqref{eq:lyap1} we get
$$
V(A_i x + B_i \Phi(x)) \le V(x)-\|x\| \le \left(1 - \frac{1-\gamma}{C}\right) V(x),
$$
for all $i\in \Sigma$. 
Hence $V$ is a continuous exponential Lyapunov function for the closed-loop, which can be described more compactly by a difference inclusion defined by
\[
x^+\in \{A_ix+B_i\Phi(x)\;\;\vert\;\;i\in \Sigma\}.
\]
Thus, by a standard Lyapunov argument, the closed-loop is UES. This, by definition, implies that system~\eqref{eq:SwitchedSystemInput} is IFS, concluding the proof of Item~\emph{(A)}.

The proof for the mode-dependent case follows essentially the same lines, with 
the main differences as follows.
We suppose that the system is DFS$_m$.
We consider the set of \emph{current-mode dependent controllers with state-signal memory}, defined by 
\[
\cC_m^d:=\{\chi^d:\cH\to \R^m\}.
\]
Given $\Psi^d\in \cC_m^d$, as usual, we denote 
 by $\phi(k,\sigma,x_0,\Psi^d)$ the corresponding solution evaluated at time $k\in \N$.
Then, we can, similarly to the previous case,  consider the function 
$V:\R^n \to \R$ defined for all $x_0 \in \R^n$ by
\begin{equation}
\label{eq:lyap2}
V(x_0) = \inf_{\Psi\in \mathcal C_m^d} \sup_{\sigma \in  \Sigma^\omega} \sum_{k=0}^{\infty} 
\|\phi(k,\sigma,x_0,\Psi)\|.    
\end{equation}
Also in this case, the function $V$ can be shown to be a norm of $\R^n$. 
Moreover, using the dynamic programming principle and by coercivity and continuity of $V$, we get that
$$
V(x) = \|x\| 
+ \max_{i\in \Sigma}
\min_{u_i\in \R^m} V(A_i x + B_i u_i),\; \forall x \in \R^n.
$$
Then, 
consider the mode-dependent feedback control
$\Phi_d:\Sigma\times \R^n \to \R^m$ such that 
$$
\Phi_d(i,x)\in \argmin_{u\in \R^m} 
V(A_i x + B_i u),\;\forall\,i\in \Sigma,\; \forall x \in \R^n.
$$
Then, as in the previous case, $V$ can be shown to be a continuous Lyapunov function for the closed-loop system, which is therefore (DFS).\hfill $\square$
\end{pf}
As a byproduct of the proof of Theorem~\ref{thm:main}, we can prove that, once a feedback controller exists, the (exponential) stability of the closed loop is \emph{robust} with respect to external perturbations, in the sense formalized in~\citep{KellTeel}. 
\begin{cor}[Robustness]
System~\eqref{eq:SwitchedSystemInput} is (IFS) if and only if there exists a feedback map $\Phi:\R^n\to \R^m$ such that the differential inclusion
\[
x^+\in F_\Phi(x):= \{A_ix+B_i\Phi(x)\;\;\vert\;\;i\in \Sigma\} 
\]
is robustly\footnote{For the formal definition of robustness for difference inclusions, we refer to~\citep[Definition 2.3]{KellTeel}} uniformly exponentially stable.\\
System~\eqref{eq:SwitchedSystemInput} is (DFS) if and only if there exists a feedback map $\Phi_d:\Sigma\times \R^n\to \R^m$ such that the difference inclusion
\[
x^+\in F_\Phi^d(x):=\{A_ix+B_i\Phi_d(i,x)\;\;\vert\;\;i\in \Sigma\} 
\]
is robustly uniformly exponentially stable.\\
\end{cor}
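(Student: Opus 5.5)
The ``if'' direction in both items is immediate: robust UES of the closed-loop inclusion implies, in particular, nominal UES of the switched closed loop. In the (IFS) case we take the solutions of $x^+\in F_\Phi(x)$ with the selection $x^+=A_{\sigma(k)}x+B_{\sigma(k)}\Phi(x)$ for an arbitrary $\sigma\in\Sigma^\omega$, and these are exactly the trajectories required in Definition~\ref{defn:StabNot}(1); the mode-dependent case is identical. The work is in the ``only if'' direction. There we reuse the construction in the proof of Theorem~\ref{thm:main} and then invoke the converse-Lyapunov / robustness equivalence of~\citep{KellTeel}: for difference inclusions, the existence of a \emph{continuous} Lyapunov function with a uniform decrease gives robust stability.

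Concretely, assume the system is IFS. Then it is trivially IFS$_m$, so the value function $V$ in~\eqref{eq:lyap} is well defined and, by the proof of Theorem~\ref{thm:main}, is a norm on $\R^n$. Choose $\Phi$ homogeneous of degree one with $\Phi(x)$ a minimizer in~\eqref{eq:dp}. Then
\[
\max_{y\in F_\Phi(x)} V(y)\le \rho V(x),\qquad \rho:=1-\tfrac{1-\gamma}{C}<1 .
\]
We would check that $V$ is globally Lipschitz, with constant $L$ given by norm equivalence, and sandwiched as $\|x\|\le V(x)\le \tfrac{C}{1-\gamma}\|x\|$. Robustness in the sense of~\citep[Definition~2.3]{KellTeel} asks for a UES-type bound to persist for the inflated inclusion $x^+\in F_\Phi(x)+\delta(x)\,\overline{\mathbb B}$, with $\delta$ a positive perturbation radius. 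Since we want an exponential statement, we take $\delta(x)=\varepsilon\|x\|$. For any $y\in F_\Phi(x)$ and $\|w\|\le\varepsilon\|x\|$ we get $V(y+w)\le\rho V(x)+L\varepsilon\|x\|\le(\rho+L\varepsilon)V(x)$, which is still a strict contraction for $\varepsilon<(1-\rho)/L$. Combined with the sandwich bounds, this gives uniform exponential stability of the perturbed inclusion.

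The remaining care is definitional. \citet{KellTeel} phrase robustness for inclusions with compact-valued, possibly nonsmooth right-hand sides, and often require some regularity (outer semicontinuity, local boundedness) of $F$. Our $\Phi$ is only a measurable-type selection of an argmin and may be discontinuous. This is the step I expect to be the main obstacle. The plan is to avoid needing regularity of $F_\Phi$ by arguing directly with the continuous Lyapunov function as above. If instead their definition uses the closure/regularization of $F_\Phi$, we would replace $F_\Phi$ by its outer-semicontinuous hull $\overline{F_\Phi}$. By continuity of $V$, every limit point $y$ of $A_ix_j+B_i\Phi(x_j)$ with $x_j\to x$ still satisfies $V(y)\le\rho V(x)$; by homogeneity, $\Phi$ is bounded on the unit sphere as long as we pick minimizers of minimal norm on $K^\perp$ (coercivity there), so such limits exist. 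Hence the same decrease holds on the regularization, and the cited converse/robustness theorem applies.

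The mode-dependent statement follows verbatim. We use $V$ from~\eqref{eq:lyap2}, take $\Phi_d(i,x)$ a minimal-norm minimizer of $u\mapsto V(A_ix+B_iu)$, and obtain $V(A_ix+B_i\Phi_d(i,x))\le\rho V(x)$ for every $i$. The same Lipschitz inflation argument then yields robust UES of $x^+\in F^d_\Phi(x)$.
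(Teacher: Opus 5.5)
Your proposal is correct, and its backbone is the paper's. Both use the norm $V$ from the proof of Theorem~\ref{thm:main}, a degree-one homogeneous minimizing feedback, and the uniform contraction $V(y)\le\rho V(x)$ for $y\in F_\Phi(x)$, and both get robustness from the continuity of $V$. The difference is the last step. The paper just invokes \citep[Theorem~2.8]{KellTeel} for a continuous Lyapunov function and then asserts exponentiality by homogeneity. You instead verify robustness by hand through the Lipschitz bound $|V(x)-V(y)|\le\tfrac{C}{1-\gamma}\|x-y\|$. That argument is self-contained and gives an explicit exponential rate for the perturbed inclusion. You also spell out the trivial ``if'' direction, which the paper leaves implicit.

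One correction to your hand check. The perturbed inclusion in \citep[Definition~2.3]{KellTeel} is not just $F_\Phi(x)+\delta(x)\overline{\mathbb B}$. It also perturbs the point where $F$ is evaluated: successors are $z\in F_\Phi(x')$ with $\|x'-x\|\le\delta(x)$, and these are then inflated by $\delta(z)$. For a discontinuous $\Phi$ this inner perturbation is exactly the delicate part, and your inequality as written does not cover it. It costs one more line of the same kind. With $\delta(x)=\varepsilon\|x\|$, $L=\tfrac{C}{1-\gamma}$ and $\|v-z\|\le\varepsilon\|z\|$,
\[
V(v)\le(1+L\varepsilon)V(z)\le(1+L\varepsilon)\rho\,V(x')\le\rho(1+L\varepsilon)^2\,V(x).
\]
This is a contraction for small $\varepsilon$. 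It uses only that the decrease holds at every point $x'$, never any continuity of $\Phi$. So your detour through the outer-semicontinuous hull of $F_\Phi$ and the boundedness of minimal-norm minimizers is correct but not needed.
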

\begin{pf}
In the proof of Theorem~\ref{thm:main} we have proved that (IFS) (or, equivalently, (IFS$_m$)) implies the existence of a (homogeneous of degree~$1$) feedback $\Phi:\R^n\to \R^m$ and a continuous function $V:\R^n\to \R$ such that there exist $a_1,a_2\geq 0$ and $\wt \gamma\in [0,1)$ such that
\[
\begin{aligned}
a_1\|x\|\leq V(x)&\leq a_2\|x\|\;\;\;\forall \;x\in \R^n\\
V(f)&\leq \wt \gamma V(x)\;\;\;\forall \;x\in \R^n,\;\;\forall \;f\in F_\Phi(x).
\end{aligned}
\]
By applying \citep[Theorem 2.8]{KellTeel}, since $V$ is continuous, we can conclude that the stability of the closed-loop is  robust, 
 in the sense of ~\citep[Definition 2.3]{KellTeel}.
 The fact that the stability is in particular exponential follows by the fact that the closed loop is homogeneous of degree~$1$, or, equivalently, that the function $V$ is actually a norm of $\R^n$, with linear decrease along the solutions of the considered difference inclusion.

 The mode-dependent case is similar and thus not explicitly developed here.\hfill $\square$
\end{pf}

\begin{rem}
In Theorem~\ref{thm:main} we have shown that, as long as one allows for non-linear feedback maps, the additional information provided by the past behavior of the system is not required. This equivalence is theoretically significant, as it characterizes the feedback stabilization of linear switched systems solely through feedback laws that depend on the current state (and possibly on the currently active mode).

On the other hand, in practical scenarios the use of dynamic controllers, or controllers that exploit the information carried by the past values of the switching signal, can be advantageous. Indeed, it has already been established in the literature~\citep{BlaMia03,BlaMiaSav07,LeeKha09,Lee06} that in such cases the (past-dependent, broadly speaking) feedback maps can be chosen to depend \emph{linearly} on the current state. From a numerical standpoint, this is beneficial because the resulting control-design problem can then be reformulated as a matrix optimization problem. Nevertheless, even the design of non-linear (though piecewise-linear) controllers of the type described in (the proof of) Theorem~\ref{thm:main} can be addressed by means of linear matrix inequalities techniques, for instance as developed in~\cite{DelRosAlv24} (see also references therein).
\end{rem}

\section{Conclusion}\label{sec:conc}

In this paper, we have studied a fundamental question concerning the stabilizability of switched systems under arbitrary switching through a continuous control input: namely, whether memory is actually required to achieve stabilization. Our main result, established through a rigorous development, shows that if a stabilizing controller with infinite memory exists, then a static controller also exists.

This finding clarifies the role of memory-based strategies. While controllers with memory can be useful for obtaining linear dynamic controllers, linear static controllers are known to be conservative in the context of switched systems. The results of this work indicate that memory need not be invoked to achieve stabilizability, although static controllers may need to be piecewise linear and homogeneous of degree one, which is consistent with prior observations in the literature.

Future work may investigate numerical methods for approximately constructing the memoryless controllers identified in this paper, explore extensions to more general classes of switched and hybrid systems for which this equivalence holds, and examine the case of output feedback.



\bibliography{ifacconf}             

@article{lima2025,
  author  = {T. {Alves Lima} and M. {Della Rossa} and A. Girard},
  title   = {Feedback stabilization of switched systems under arbitrary switching: A convex characterization},
  journal = {arXiv preprint arXiv:2506.03759},
  year    = {2025},
  note    = {Available at \url{https://arxiv.org/abs/2506.03759}.}
}

@article{WICKS1998140,
title = {Switched Controller Synthesis for the Quadratic Stabilisation of a Pair of Unstable Linear Systems},
journal = {European Journal of Control},
volume = {4},
number = {2},
pages = {140-147},
year = {1998},
issn = {0947-3580},
author = {M. Wicks and P. Peleties and R. DeCarlo},
}

@book{Lib03,
  title={Switching in Systems and Control},
  author={D.~Liberzon},
  series={Systems \& Control: Foundations \& Applications},
  year={2003},
  publisher={Birkh{\"a}user}
}

@article{BlaMiaSav07,
title = {Stability results for linear parameter varying and switching systems},
journal = {Automatica},
volume = {43},
number = {10},
pages = {1817-1823},
year = {2007},
issn = {0005-1098},
author = {F.~Blanchini and S.~Miani and C.~Savorgnan},
}

@article{FIACCHINI2017181,
title = {Control co-design for discrete-time switched linear systems},
journal = {Automatica},
volume = {82},
pages = {181-186},
year = {2017},
issn = {0005-1098},
author = {M.~Fiacchini and S.~Tarbouriech},
}

@article{BlaMia03,
author = {F.~Blanchini and S.~Miani},
title = {Stabilization of {LPV} Systems: State Feedback, State Estimation, and Duality},
journal = {SIAM Journal on Control and Optimization},
volume = {42},
number = {1},
pages = {76-97},
year = {2003},
}

@article{AhmJun:14,
	author={A.~A.~Ahmadi and R.~M.~Jungers and P.~A.~Parrilo and M.~Roozbehani},
	title={Joint spectral radius and path-complete graph {Lyapunov} functions},
	journal={SIAM Journal on Control and Optimization},
	year={2014},
	volume={52},
	pages={687--717},
	publisher={SIAM}
}

@article{DelRosAlv24,
title = {Graph-based conditions for feedback stabilization of switched and {LPV} systems},
journal = {Automatica},
volume = {160},
pages = {111427},
year = {2024},
issn = {0005-1098},
author = {{Della Rossa}, M. and  {Alves Lima}, T. and Jungers, M. and Jungers, R.~M.}
}

@article{Lee06,
  author={Lee, J.-W.},
  journal={IEEE Transactions on Automatic Control}, 
  title={On Uniform Stabilization of Discrete-Time Linear Parameter-Varying Control Systems}, 
  year={2006},
  volume={51},
  number={10},
  pages={1714-1721},
}

@article{HuMaLIn08,
  title={Stabilization of switched systems via composite quadratic functions},
  author={T.~Hu and L.~Ma and Z.~Lin},
  journal={IEEE Transactions on Automatic Control},
  volume={53},
  number={11},
  pages={2571--2585},
  year={2008},
  publisher={IEEE}
}

@article{GerCol06,
author = { J.C.~Geromel  and  P.~Colaneri},
title = {Stability and stabilization of discrete time switched systems},
journal = {International Journal of Control},
volume = {79},
number = {7},
pages = {719-728},
year  = {2006},
publisher = {Taylor \& Francis},
}

@article{JunMas17,
author = {R.M.~Jungers and P.~Mason},
title = {On Feedback Stabilization of Linear Switched Systems via Switching Signal Control},
journal = {SIAM Journal on Control and Optimization},
volume = {55},
number = {2},
pages = {1179-1198},
year = {2017},
}

@ARTICLE{LeeKha09,
  author={J.-W.~Lee and P.P.~Khargonekar},
  journal={IEEE Transactions on Automatic Control}, 
  title={Detectability and Stabilizability of Discrete-Time Switched Linear Systems}, 
  year={2009},
  volume={54},
  number={3},
  pages={424-437}
}

@article{KellTeel,
author = {C.M.~Kellett and A.R.~Teel},
title = {On the Robustness of {$\mathcal{KL}$}-stability for Difference Inclusions: Smooth Discrete-Time {Lyapunov} Functions},
journal = {SIAM Journal on Control and Optimization},
volume = {44},
number = {3},
pages = {777-800},
year = {2005},
}

@article{DelRosJung24,
author = {M.~{Della Rossa} and R.M.~Jungers},
title = {Multiple {Lyapunov} Functions and Memory: A Symbolic Dynamics Approach to Systems and Control},
journal = {SIAM Journal on Control and Optimization},
volume = {62},
number = {5},
pages = {2695-2722},
year = {2024},
}

@ARTICLE{FiaGir16,
  author={Fiacchini, M. and Girard, A. and Jungers, M.},
  journal={IEEE Transactions on Automatic Control}, 
  title={On the Stabilizability of Discrete-Time Switched Linear Systems: Novel Conditions and Comparisons}, 
  year={2016},
  volume={61},
  number={5},
  pages={1181-1193}
}

@article{DettJun20,
title = {Lower bounds and dense discontinuity phenomena for the stabilizability radius of linear switched systems},
journal = {Systems \& Control Letters},
volume = {142},
pages = {104737},
year = {2020},
author = {Dettmann, C.~P. and  Jungers, R.~M. and Mason, P.},
}

@article{ZhangAbate09,
title = {Exponential stabilization of discrete-time switched linear systems},
journal = {Automatica},
volume = {45},
number = {11},
pages = {2526-2536},
year = {2009},
author = {Zhang, W. and Abate, A. and  Hu, J. and  Vitus, M.~P.},
}

@article{LinAnt08,
author = {Lin, H. and  Antsaklis, P.~J.},
title = {Hybrid state feedback stabilization with $\ell_2$ performance for discrete-time switched linear systems},
journal = {International Journal of Control},
volume = {81},
number = {7},
pages = {1114--1124},
year = {2008},
publisher = {Taylor \& Francis},
}

@book{bertsekas2012dynamic,
  title={Dynamic programming and optimal control: Volume I},
  author={Bertsekas, D.},
  volume={4},
  year={2012},
  publisher={Athena scientific}
}
                                            


\end{document}